\newtheorem{theorem}{Theorem}
\newtheorem{proposition}[theorem]{Proposition}
\theoremstyle{definition}
\newtheorem{remark}[theorem]{Remark}
\def\k{\text{\it  K}}
\def\Q{\mathbb{Q}}
\def\R{\mathbb{R}}
\def\N{\mathbb{N}}
\def\Z{\mathbb{Z}}
\def\C{\mathbb{C}}
\def\O{\text{\it O}}
\begin{document}


\title{Integral points on plane curves\\ and\\  Plane Jacobian Conjecture over  number fields}
\author{Nguyen Van Chau}

\address{Institute of Mathematics, Vietnam Academy of Science and Technology, 18 Hoang Quoc Viet, 10307 Hanoi, Vietnam.}
\email{nvchau@math.ac.vn}
\thanks{The author was partially supported by Vietnam National Foundation for Science and Technology
Development (NAFOSTED) grant 101.04-2017.12, and Vietnam Institute for Advanced Study in Mathematics (VIASM)}

\subjclass[2010]{14R15, 14R25, 11D72 }
\date{}
\keywords{Integral point, Plane Jacobian conjecture, Thin set}

\begin{abstract}  Let $\k$ be a number field and $\O_\k$  the ring of integers of $\k$. In the spirit of Siegel's theorem on integral points on affine algebraic curves, the plane Jacobian conjecture over $\k$ is equivalent to the following statement: if $P,Q\in \O_\k[x,y]$ and $P_xQ_y-P_yQ_x\equiv 1$, then the  curve $P=0$ has more than one integral point.
\end{abstract}

\maketitle
\markboth{Nguyen Van Chau}{Plane Jacobian conjecture over number fields}

\section{introduction} 
The Jacobian conjecture (see \cite{Keller,Bass, EssenBook}) asserts that for any field $\k$  of characteristic zero every polynomial map $F=(F_1,F_2,\dots,F_n)\in \k[X]^n$ with $ \det DF\equiv 1$ is invertible over $\k$, i.e. $F$ has an inverse in $\k[X]^n$, where $X=(X_1,X_2,\dots, X_n)$. It is known that, for the case of number fields, 
 this conjecture is  equivalent to the  statement:  if $F\in \O_\k[X]^n$ and $\det DF\equiv 1$, then the Diophantine equation system $$F_i(X)=0,\; i=1,2,\dots, n,$$  has an integral solution, i.e. a solution in $\O_\k^n$. Here, $\O_\k$ denotes the ring of  integers of $\k$. This equivalence is a consequence of the deep result due to McKenna and Van de Dries (Theorem A, \cite{McKenna}), which says  that,  for every affine  domain $R$ but not a field, surjective polynomial maps of $R^n$  are automorphisms. 

In this article we would like to note that the plane Jacobian conjecture over a number field can be viewed as  a problem on the numbers of integral points on plane algebraic curves.  
\begin{theorem} Let $\k$ be a number field and $\O_\k$ the ring of integers of $\k$. The following statements are equivalent:
\begin{enumerate} 
\item[i)]
 Every polynomial map $F=(P,Q)\in \k[x,y]^2$ with $\det DF\equiv 1$ is invertible over $\k$.
\item[ii)] If $F=(P,Q)\in \O_\k[x,y]^2$ and $\det DF\equiv 1$
, then the  curve $P=0$ has more than one integral point.
\end{enumerate}
\end{theorem}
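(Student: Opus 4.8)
The plan is to route both implications through the reformulation recalled in the introduction, which (specialized to $n=2$, and a consequence of the McKenna--Van de Dries theorem) makes $(i)$ equivalent to the assertion $(i')$ that for every $F=(P,Q)\in\O_\k[x,y]^2$ with $\det DF\equiv1$ the system $P=Q=0$ has a solution in $\O_\k^2$. Granting this, $(i)\Rightarrow(ii)$ is immediate: assuming $(i)$, hence $(i')$, let $F=(P,Q)\in\O_\k[x,y]^2$ with $\det DF\equiv1$; applying $(i')$ to $F$ gives $p_1\in\O_\k^2$ with $P(p_1)=Q(p_1)=0$, and applying $(i')$ to $(P,Q-1)$, which again lies in $\O_\k[x,y]^2$ and has Jacobian $\equiv1$, gives $p_2\in\O_\k^2$ with $P(p_2)=0$ and $Q(p_2)=1$. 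Since $Q(p_1)\ne Q(p_2)$, the points $p_1\ne p_2$ are two distinct integral points of the curve $P=0$.

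For $(ii)\Rightarrow(i)$ I argue contrapositively: from the failure of $(i)$ I must exhibit some $F'=(P',Q')\in\O_\k[x,y]^2$ with $\det DF'\equiv1$ whose curve $P'=0$ carries at most one integral point. Start with any $F=(P,Q)\in\k[x,y]^2$ with $\det DF\equiv1$ that is not invertible over $\k$. Composing $F$ with affine automorphisms of $\k^2$ (to arrange $F(0)=0$ with linear part the identity), then replacing $F$ by $\lambda^{-1}F(\lambda x,\lambda y)$ for a suitable nonzero $\lambda\in\O_\k$ clearing the remaining denominators, and finally interchanging $(P,Q)$ with $(Q,-P)$ if $\deg P<\deg Q$, one may assume $F\in\O_\k[x,y]^2$ and --- since a Jacobian pair with $\max(\deg P,\deg Q)\le2$ is known to be invertible --- that $\deg P\ge3$; none of these steps affects $\det DF\equiv1$ or the non-invertibility.

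The problem is thereby reduced to the claim that \emph{a non-invertible $F=(P,Q)\in\O_\k[x,y]^2$ with $\det DF\equiv1$ has $P\colon\O_\k^2\to\O_\k$ non-surjective}; indeed, if $c\in\O_\k$ is omitted by $P$ then $F'=(P-c,Q)$ is as wanted. To prove this claim I would combine the following. Since $\det DF=P_xQ_y-P_yQ_x\equiv1$, the gradient of $P$ never vanishes, so each level curve $\{P=c\}$ is smooth and $Q$ restricts on it to an \'etale morphism onto $\mathbb{A}^1$. Moreover $P$ is primitive (if $P=g\circ h$ with $\deg g\ge2$, the non-unit $g'(h)$ would divide $\det DF$), so the generic fibre of $P$ is geometrically irreducible; it cannot be isomorphic to $\mathbb{A}^1$, for by Abhyankar--Moh--Suzuki that would make $P$ a coordinate and hence $(P,Q)$ invertible, and (by a further rigidity argument) it is not isomorphic to $\mathbb{G}_m$ either. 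Hence the generic fibre is of logarithmic general type, and Siegel's theorem gives that $\{P=c\}$ has only finitely many integral points for all but finitely many $c\in\O_\k$.

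The delicate step --- and what I expect to be the main obstacle --- is to upgrade this fibrewise finiteness to the omission of an honest value of $\O_\k$ by $P$. Heuristically this should follow by counting: with $\deg P\ge3$, the $P$-values attained at integral points of bounded height occupy only a sparse part of $\O_\k$, so if every fibre $\{P=c\}$ with $c\in\O_\k$ met $\O_\k^2$ one would produce too many integral points too cheaply. Turning the heuristic into a proof, however, requires controlling the heights of the finitely many integral points on the curves $\{P=c\}$ uniformly in $c$ --- a uniform version of Siegel's theorem along this family --- or else replacing $P$ by $P+h(Q)$ (still the first coordinate of an integral Jacobian pair) for a carefully chosen $h\in\O_\k[t]$ of large degree, for which the integral points of the new curve become directly controllable. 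Securing that uniformity is the crux of the argument.
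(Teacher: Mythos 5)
Your first direction is fine: granting the McKenna--van den Dries reformulation quoted in the introduction, applying it to $(P,Q)$ and to $(P,Q-1)$ does produce two distinct integral points on $P=0$. This is a slightly different (and clean) route from the paper's, which instead argues that an $F$ invertible over $\O_\k$ pulls the infinitely many integral points of the line $u=k$ back to infinitely many integral points of every fiber $P=k$.

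The second direction, however, has a genuine gap, and it is exactly where you locate it. You reduce $\neg$(i)$\Rightarrow\neg$(ii) to the claim that a non-invertible integral Jacobian pair has $P\colon\O_\k^2\to\O_\k$ \emph{non-surjective}, i.e.\ that some fiber $P=c$, $c\in\O_\k$, contains \emph{no} integral point. This is far stronger than what Siegel's theorem gives you (finitely many integral points on each non-exceptional fiber), and upgrading it would require precisely the uniform-in-$c$ height control you concede you do not have; no such uniform Siegel theorem is available, and even the weaker uniform \emph{boundedness of the number} of integral points per fiber is, as the paper notes in Section 4, open outside $\k=\Q$ and imaginary quadratic fields. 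The missing idea is that you do not need an omitted value at all: it suffices to have \emph{one} fiber $P=k$, $k\in\O_\k\setminus E_P$, with only finitely many integral points (which your Siegel/Abhyankar--Moh/Druzkowski analysis of the generic fiber does give for non-invertible $F$). Normalize $F$ so that this fiber is $P=0$, passes through the origin, and $F$ has identity linear part; then for a suitable $0\neq r\in\O_\k$ the dilation $^rF(X):=\tfrac1r F(rX)$ is again an integral Jacobian pair, and since the finitely many nonzero integral points of $\{P=0\}$ can be forced to miss the sublattice $r\O_\k^2$, the curve $\{^rP=0\}$ has the origin as its \emph{unique} integral point --- contradicting (ii) directly. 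This ``clearing map'' dilation (properties (a)--(c) in Section 3 of the paper) is the step that converts fibrewise finiteness into a violation of (ii), and it is what your counting heuristic is trying, unsuccessfully, to replace. (A minor further point: your exclusion of a generic fiber isomorphic to $\mathbb{G}_m$ ``by a further rigidity argument'' is not free either; the paper needs Druzkowski's theorem that $n_P\neq 2$ for a Jacobian pair.)
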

In view of such equivalences it is worth to consider the plane Jacobian conjecture over number fields from viewpoint of  Diophantine  geometry of plane algebraic curves. 

The proof of Theorem 1, which will be presented in the two next sections, is based on the celebrated Siegel's theorem \cite{Siegel} on integral points on affine algebraic curves.
 The last section is devoted to a short discussion on the thinness of the image set 
$F(O_\k^2)$ 
of a possible counterexample $F\in \O_\k[x,y]^2$ to the Jacobian conjecture.
 
\section{Fiber with infinitely many integral points}
From now on,  $\k$ is a number field and $\O_\k$ the ring of integers of $\k$. By an integral point we mean a point with coordinates in $\O_\k$. 

Let $h\in \C[x,y]$ be a primitive polynomial. Recall, the {\it exceptional value set} $E_h$ of $h$ is the smallest subset of $\C$ such that  such that the restriction $h: \C^2\setminus h^{-1}(E_h)\longrightarrow \C\setminus E_h$ determines a locally trivial fibration. $E_h$ is a finite set, by  a result of R. Thom, and the fibers of $h$ over values outside $E_h$ are diffeomorphic to same a  connected Riemann surface  with genus $g_h$ and $n_h$ punctures. 

\begin{theorem} Suppose $F=(P,Q)\in \O_\k[x,y]^2$ with $\det DF\equiv 1$. If for a value $k\in \O_\k\setminus E_P$ the fiber $P=k$ has infinitely many integral points, then $F$ is invertible over $\k$.
\end{theorem}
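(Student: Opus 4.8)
The plan is to combine Siegel's theorem with the remark that, along any smooth fiber of $P$, the restriction of $Q$ is an \'etale map to the affine line. \emph{Setting up the fiber.} Since $\det DF\equiv1$, the polynomial $P$ has no critical point, and $P$ is primitive: if $P=u(R)$ with $\deg u\ge2$, then $\det DF=u'(R)\,(R_xQ_y-R_yQ_x)$ would force $u'(R)$, hence $u'$, to be constant, a contradiction. Thus, for $k\notin E_P$ the fiber $C:=P^{-1}(k)$ is a smooth irreducible affine curve, diffeomorphic to the generic fiber of $P$, of genus $g_P$ with $n_P$ punctures, and the same holds for every fiber over $\C\setminus E_P$. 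At each point of $C$ the tangent line is $\ker dP=\langle(-P_y,P_x)\rangle$ (nonzero, since $dP$ never vanishes), so the derivative of $q:=Q|_C$ along $C$ equals $-P_yQ_x+P_xQ_y=\det DF\equiv1$; hence $q\colon C\to\C$ is a nonconstant \'etale morphism, extending to a finite surjection $\bar q\colon\bar C\to\P^1$. By hypothesis $C$ has infinitely many $\O_\k$-points, so Siegel's theorem gives $g_P=0$ and $n_P\le2$, and in particular $\bar C\cong\P^1$.

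\emph{The case $n_P=1$.} Then $C\cong\mathbb{A}^1$, and the sole puncture must be sent to $\infty$ (else the surjection $\bar q$ would miss $\infty$); Riemann--Hurwitz then forces $\deg\bar q=1$, so $q$ is an isomorphism. The same applies to $Q|_{P^{-1}(k')}$ for every $k'\notin E_P$, so $F$ maps each such fiber bijectively onto $\{k'\}\times\C$; since these target lines are pairwise disjoint, $F$ is injective on the dense open set $P^{-1}(\C\setminus E_P)$, whence $\deg F=1$. So $F$ is birational, and, being also quasi-finite, it is by Zariski's Main Theorem an open immersion onto an open $V\subseteq\C^2$ with $V\cong\C^2$. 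As $V$ is then affine, $\C^2\setminus V$ is empty or a divisor $\{g=0\}$; in the latter case $\C[V]=\C[x,y][1/g]$ contains the nonconstant unit $g$, contradicting $\C[V]\cong\C[x,y]$, whose units are constant. Hence $V=\C^2$ and $F$ is invertible over $\k$. (Equivalently, $C\cong\mathbb{A}^1$ is a polynomial embedding of the line and one may invoke Abhyankar--Moh--Suzuki.)

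\emph{The case $n_P=2$, and the main obstacle.} Here $C$ is, over $\bar\k$, isomorphic to $\mathbb{G}_m$; in a multiplicative coordinate $t$ the only \'etale Laurent maps to $\mathbb{A}^1$ are $q(t)=\alpha t^{m}+\beta$ with $m\ne0$, so $\deg F=|m|$, and by Riemann--Hurwitz one puncture is sent to $\infty$ and the other to the finite value $\beta$, which $q|_C$ omits. A short Galois argument rules out the two punctures being conjugate over a quadratic extension rather than $\k$-rational: the nontrivial automorphism of that extension fixes $q$ but sends $t$ to a constant multiple of $t^{-1}$, which is incompatible with $q=\alpha t^m+\beta$ when $m\ne0$; so $C\cong\mathbb{G}_{m,\k}$. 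If $|m|=1$, then as above $\deg F=1$ and $F$ is an open immersion onto $\C^2$ minus a curve (the closure of the graph of the omitted values $k'\mapsto\beta_{k'}$), which the same units argument excludes; thus $|m|\ge2$. Ruling out this last possibility is where I expect essentially all the difficulty to be concentrated: one must show that a plane Jacobian pair cannot have a fiber, equivalently a generic fiber, isomorphic to $\mathbb{G}_m$ --- either by feeding the very rigid form of the embedding $C\hookrightarrow\mathbb{A}^2$ imposed by $\det DF\equiv1$ back into Siegel's estimates, so as to contradict the existence of infinitely many $\O_\k$-points on $C$, or by invoking the classification of Jacobian pairs possessing a $\C$- or $\C^{*}$-fiber. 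Everything else in the argument is formal once Siegel's theorem and the \'etaleness of $Q|_C$ are in hand.
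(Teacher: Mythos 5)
Your argument is incomplete: the case $n_P=2$ is not ruled out, and you say so yourself. Everything up to and including the application of Siegel's theorem matches the paper ($P$ has no critical values outside $E_P$, the fiber is a smooth irreducible curve, infinitely many $\O_\k$-points force $g_P=0$ and $n_P\in\{1,2\}$), and your treatment of the case $n_P=1$ --- \'etaleness of $Q|_C$, degree one, Zariski's Main Theorem plus the units argument --- is a correct, essentially self-contained proof of what you need there. But the theorem is not proved until the possibility that the generic fiber of $P$ is $\mathbb{G}_m$ (your case $|m|\ge 2$, and really the whole $n_P=2$ branch) is excluded, and your text explicitly defers this: ``Ruling out this last possibility is where I expect essentially all the difficulty to be concentrated.'' That is a genuine gap, not a routine verification; neither of the two strategies you sketch for it (feeding the embedding back into Siegel's estimates, or citing an unnamed classification) is carried out.

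The paper closes exactly this gap by quoting Druzkowski's theorem (the Main Theorem of \cite{Druz}, restated as Theorem 4 there): for any $F=(P,Q)\in\C[x,y]^2$ with $\det DF\equiv c\neq 0$, one has $n_P\neq 2$, and $F$ is invertible if and only if $n_P=1$. With that result in hand the whole proof collapses to two lines: Siegel gives $g_P=0$ and $n_P\in\{1,2\}$, Druzkowski forbids $n_P=2$ and converts $n_P=1$ into invertibility. So the fix for your write-up is either to cite \cite{Druz} for the statement $n_P\neq 2$ (in which case your $n_P=1$ analysis also becomes redundant, since Druzkowski already gives invertibility from $n_P=1$), or to supply an actual proof that a Jacobian pair cannot have generic fiber $\C^{*}$ --- which is a real geometric theorem, not a formal consequence of the \'etaleness of $Q|_C$.
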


\begin{remark}
i) For the case $\k=\Q$, Theorem 2 can be reduced from Bodin's classification \cite{Bodin} for primitive polynomials $h\in \Q[x,y]$ such that for a value $k\in \Q\setminus E_h$ the fiber $h=k$ has infinitely many points in the lattice $\Z^2$. Indeed, by this classification, if such a polynomial $h$ has no singular point, then there is a polynomial automorphism $\Phi$ of $\Q^2$ such that $h\circ\Phi(x,y)=x$. 

ii) For  the case  $\k=\Q$ or an imaginary quadratic field, 
Theorem 2 is true without the condition   $k\in \O_\k\setminus E_P$. In fact, if $F=(P,Q)\in \O_\k[x,y]^2$ with $\det DF\equiv 1$ is not invertible, then the numbers of integral points on the fibers $P=k$, $k\in \O_\k$, are uniformly bounded  by a constant depending on $F$ and $\k$(Lemma 2, \cite{Chau2006}). 
\end{remark}

To prove Theorem 2 we will use the following result due to Druzkowski \cite{Druz}, which is reformulated in a convenience statement.

\begin{theorem}[see Main Theorem, \cite{Druz}] Suppose $F=(P,Q)\in \C[x,y]^2$ with $\det DF\equiv c\neq 0$. Then, $n_P\neq 2$ and $F$ is invertible over $\C$ if and only if $n_P=1$. 
\end{theorem}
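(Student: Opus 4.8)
The plan is to extract from the hypothesis $\det DF\equiv c\neq 0$ the maximal information about the generic fibre of $P$ and the restriction of $Q$ to it, reduce the whole statement to a single forbidden configuration, and then confront that configuration with the behaviour of $F$ at infinity. First I would observe that $\det DF=P_xQ_y-P_yQ_x\equiv c\neq0$ forces $\nabla P\neq0$ at every point of $\C^2$, since a common zero of $P_x,P_y$ would make the Jacobian vanish there. Hence $P$ is a submersion, every fibre $C_t:=\{P=t\}$ is smooth, and $P$ is primitive (a nontrivial factorization $P=g\circ p$ with $\deg g\geq2$ would make $g'(p)$ a nonconstant factor of $\det DF$, which is impossible); so the generic fibre $C:=C_t$ is connected of genus $g_P$ with $n_P$ punctures. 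The same computation shows that $dQ$ is nowhere proportional to $dP$, so $\phi:=Q|_{C}\colon C\to\C$ is an \emph{étale} morphism, and because $F^{-1}(t,v)=\phi^{-1}(v)$ its degree equals $\deg F=:d$.

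Next I would pass to the smooth compactification $C\hookrightarrow\overline{C}$ and the induced $\overline{\phi}\colon\overline{C}\to\P^1$ of degree $d$. Since $\phi$ is étale on $C$, all ramification of $\overline{\phi}$ is concentrated at the $n_P$ punctures, so Riemann--Hurwitz reads
\[
2g_P-2=-2d+\sum_{i=1}^{n_P}(e_i-1),\qquad 1\leq e_i\leq d .
\]
For $n_P=1$ this gives $2g_P-2\leq -d-1$, forcing $g_P=0$ and $d=1$; thus $C\cong\C$ and $\phi$ is an isomorphism. Then $\deg F=1$, and by the standard fact that a self-map of $\C^2$ with nonzero constant Jacobian is invertible precisely when $\deg F=1$, $F$ is invertible over $\C$. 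Conversely, if $F$ is invertible then $P$ is a coordinate and its generic fibre is a line, so $n_P=1$; this proves the asserted equivalence. The case $n_P=2$ collapses similarly: the inequality forces $g_P=0$ and $e_1=e_2=d$, so $\overline{\phi}$ is totally ramified over exactly two points, i.e. $C\cong\C^*$ and, in a suitable coordinate, $\overline{\phi}\colon s\mapsto s^{d}$. Moreover $d\geq2$ here, for $d=1$ would give $\deg F=1$, hence invertibility and $n_P=1$, a contradiction. Thus proving $n_P\neq2$ amounts to excluding this single $\C^*$-configuration.

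The remaining, and genuinely hard, step is to rule out the $\C^*$-fibre. In that configuration $\phi(s)=\mu(s^{d})$ for a M\"obius $\mu$ with (say) $\mu(\infty)=\infty$ and $\mu(0)=a\in\C$, so on the generic fibre $Q$ tends to the \emph{finite} value $a$ along one branch at infinity (the puncture $s=0$) and has a pole along the other (the puncture $s=\infty$), while $P\equiv t$ stays finite on both. Letting $t$ vary, the first branch produces a whole curve $\overline{\{(t,a(t))\}}$ of finite points that are limits of $F$ along branches at infinity of $\C^2$; that is, $F$ is non-proper with this curve as part of its asymptotic (non-properness) variety. The plan is then to obtain a contradiction by a local analysis at these two places at infinity: choosing a good compactification of $\C^2$ on which $F$ extends, and expanding $P,Q$ in Puiseux series along each branch of $C_t$ at infinity, the identity $dP\wedge dQ=c\,dx\wedge dy$ imposes a rigid balance between the polar orders of $P,Q$ and the order of $dx\wedge dy$ at each place. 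I expect the main obstacle to be exactly here: showing that, for a configuration with precisely two branches at infinity of the prescribed types (one $Q$-finite, one $Q$-polar), these order relations cannot hold simultaneously for all generic $t$ without forcing $\det DF$ to vanish somewhere or $d=1$. This incompatibility is the technical core of Druzkowski's theorem; granting it, $n_P\neq2$, which together with the equivalence established above completes the proof.
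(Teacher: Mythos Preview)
The paper does not prove this statement at all; it quotes it as the Main Theorem of \cite{Druz} and uses it as a black box in the proof of Theorem~2. So there is no in-paper argument to compare your proposal against, and your attempt must be judged on its own.

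Your treatment of the equivalence $n_P=1\Leftrightarrow F$ invertible is correct: the Riemann--Hurwitz count with a single puncture forces $g_P=0$ and geometric degree $d=1$, and a birational \'etale self-map of $\C^2$ is an automorphism (open immersion by Zariski's Main Theorem, then bijective by Ax). The reduction of the case $n_P=2$ to the $\C^*$-configuration with $Q|_{C_t}$ conjugate to $s\mapsto s^{d}$, $d\geq 2$, is also correct. The genuine gap is precisely where you flag it: you only \emph{outline} how to exclude the $\C^*$-fibre and explicitly write ``granting it''. Observing that one branch at infinity carries both $P$ and $Q$ to finite limits only shows that $F$ is non-proper with a one-dimensional asymptotic set; this is exactly the expected behaviour of a hypothetical Keller counterexample and is not by itself a contradiction. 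To finish you must actually carry out the local analysis you sketch---e.g.\ pass to a compactification on which $F$ extends, compute the multiplicities of $P$, $Q$ and $dx\wedge dy$ along the relevant exceptional divisors, and show that the identity $dP\wedge dQ=c\,dx\wedge dy$ is incompatible with having exactly two branches of the specified types---or else invoke a structural result (as Dru\.zkowski does) about the geometry at infinity of Jacobian pairs that rules out this pattern. As written, the proposal proves the biconditional but leaves $n_P\neq 2$ unproved.
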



\begin{proof}[{\rm \bf Proof of Theorem 2}] Siegel's theorem \cite{Siegel} asserts that every irreducible affine algebraic curve, defined over $\k$ and having infinitely many  integral points, is a rational curve with one or two unique irreducible branches at infinity. Hence, by the assumptions we have $g_P=0$ and $n_P\in \{1;2\}$. The desired conclusion now follows from Theorem 4.
\end{proof}

\section{Proof of Theorem 1}
First, let us recall the definition of the  so-called  clearing map. Let $\Phi\in \k[X]^n$, $\Phi(X)=X+\Phi_2(X)+\cdots+\Phi_m(X),$
where $\Phi_i(X)$ are homogeneous of degree $i$ and $X=(X_1,X_2,\dots, X_n)$.  The {\it clearing map } $^T\Phi$ associated to $\Phi$ and a new variable $T$ is the map defined by 
$$^T\Phi(X):=X+T\Phi_2(X)+\dots+T^{m-1}\Phi_m(X)=\frac{1}{T}\Phi(TX)$$
(see, for example, p.p. 8--9, \cite{EssenBook}). The `` operation" $T\mapsto \; ^T\Phi(X)$ has the following useful elementary properties: 
\begin{enumerate}
\item[a)] For $0\neq r\in \k$, $\det D\Phi=\det D(^r\Phi)$  and $\Phi$ is invertible if and only if so is $^r\Phi$; 

\item[b)] We can choose  $0\neq r\in\O_\k$ such that $r\Phi_i(X)\in \O_\k[X]^n$ for all $i=2,\cdots, m$. For such $r$ we have $^r\Phi\in \O_\k[X]^n$; 

\item[c)] If $\Phi\in \O_\k[X]^n$ and for an $1$-dimension subspace $L$ of $\k^n$ the curve $\Phi^{-1}(L)$ has only a finitely many integral points, we can choose $0\neq r\in \O_\k$ such that $\Phi^{-1}(L)$ does not contain any point of 
$r\O_\k^2\setminus\{(0,0,...,0)\}$. For such $r$ the origin $(0,0,..,0)$ is the unique integral point on the curve $(^r\Phi)^{-1}(L)$, since $^r\Phi(X)=\frac{1}{r}\Phi(rX)$.
\end{enumerate}

Now, we consider the statements (i) and (ii) in Theorem 1 and the statements:
\begin{enumerate}
\item[i')]  Every map $F=(P,Q)\in \O_\k[x,y]^2$ with $\det DF\equiv 1$ is invertible over $\O_\k$;
\item[ii')] If 
$F=(P,Q)\in \O_\k[x,y]^2$ and $\det DF\equiv 1$
, then for every $k\in \O_\k$ the curve $P=k$ has infinitely many integral points.
\end{enumerate}
 
The properties  (a) and (b) in above ensure that to prove or to disprove the Jacobian conjecture over $\k$ it suffices to consider only the polynomial maps with coefficients in $\O_\k$. In particular, we have 
``(i) $\Leftrightarrow $(i')". Furthermore, by (c) we can easy verify ``(ii) $\Leftrightarrow $(ii')". Moreover, the implication ``(i')$\Rightarrow$(ii')" is evident, while the reverse ``(ii')$\Rightarrow $(i')" follows from Theorem 2. Thus, we get ``(i)$\Leftrightarrow$(ii)".

\section{ The image set $F(\O_\k^2)$}
In connection with Theorem 1 and Theorem 2 let us mention here  some relevant facts on the thinness of the  image set 
$F(\O_\k^2)$ 
 of a possible counterexample $F=(P,Q)\in \O_\k[x,y]^2$ with $\det DF\equiv 1$ to the Jacobian conjecture.

Suppose $F=(P,Q)$ is such a counterexample. Then, the geometric degree $\deg_{geo.}F:=[\k(x,y):\k(P,Q)]>1$ and the restriction $$F: \C^2\setminus F^{-1} (E_F)\longrightarrow \C^2\setminus E_F$$
determines an unbranched covering of $\C^2$ with degree $\deg_{geo.}F$, where $E_F$ is the exceptional value set of $F$, i.e. the smallest subset of $\C^2$ outside which $F$ is locally trivial. Note that $E_F$ is an algebraic curve in $\C^2$.

Since $\deg_{geo.}F>1$,  $F(\O_\k^2)$ 
is a  typical thin set 
 in $\k^2$ in sense of J-P. Serre (see p. 121, \cite{Serre}, for definitions). 
As usual, to measure the thinness of thin sets  we may use the counter function $N_M:\R^+\longrightarrow \N$, associated to each subset $M\subset \k^n$ and defined by 
$$N_M(R):= \text{\rm  the number of points in } M \text{ \rm  with height }\leq R.$$ 
By Schanuel's theorem in \cite{Scha}
$$N_{\O_\k^n}(R)\sim cR^{nd} \text{ as } R\rightarrow +\infty\eqno(1)$$
with a number $c>0$, where  $d:=[\k:\Q]$ - the degree of the extension $ \k/\Q$.  By Cohen's theorem in \cite{Cohen} 
for any thin set $M$ in $\k^n$
$$N_M(R)\leq c R^{(n-\frac{1}{2})d} (\log R)^\gamma \text{ as } R\rightarrow +\infty\eqno(2)$$
with some $c>0$ and $\gamma<1$.
 
Consider the case $\k=\Q$ or an imaginary quadratic field.  
In this case, the points of $F(\O_\k^2)$ are closed to $E_F$ and $F(\O_\k^2)$ seems to be very thin. Indeed, as shown in [Proposition 1, \cite{Chau2006}], for each  $(u,v)\in F(\O_\k^2)$ there is a number $t\in \C$ with $\vert t\vert \leq 1$ such that $(u,v+t)\in E_F$. On the other words,
$$F(\O_\k^2)\subset \bigcup_{ u\in \O_\k, (u,v)\in E_F}\{ (u,v+t): \vert t\vert\leq 1\}.\eqno(3)$$ 
In comparing with (1) and (2), we can see that the thinness of the image set $F(\O_\k^2)$  is  at most as those of $\O_\k$. In fact,  we have 

\begin{proposition} Let $\k=\Q$  or an imaginary quadratic field. 
If $F=(P,Q)\in \O_\k[x,y]^2$ with $\det DF\equiv 1$ is not invertible, then 
$$N_{F(\O_\k^2)}(R)\leq c R^d \text{ as } R\rightarrow +\infty \eqno(4)$$
with a number $c>0$.
\end{proposition}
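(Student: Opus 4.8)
The plan is to bound $N_{F(\O_\k^2)}(R)$ by counting the points of $F(\O_\k^2)$ that can appear with height at most $R$, using the containment (3) together with the fact that, for $\k=\Q$ or imaginary quadratic, $\O_\k$ is a lattice of rank $d$ in $\C$. First I would observe that the height of a point $(u,v)\in\k^2$ is comparable (up to bounded factors) to $\max(|u|,|v|)$ when $\k=\Q$ or imaginary quadratic, since in these cases $\k$ has a single archimedean place and the height is essentially the usual complex absolute value of the coordinates (after clearing a bounded denominator, which is irrelevant here because we work inside $\O_\k^2$). So a point of $F(\O_\k^2)$ of height $\le R$ has both coordinates of modulus $O(R)$.

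Next I would use the inclusion (3): every $(u,v)\in F(\O_\k^2)$ satisfies $(u,v+t)\in E_F$ for some $t\in\C$ with $|t|\le 1$, and $u\in\O_\k$. Fix the $u$-coordinate: there are $O(R^d)$ choices of $u\in\O_\k$ with $|u|\le O(R)$, by Schanuel's estimate (1) specialised to $n=1$ (equivalently, by the lattice point count for $\O_\k\subset\C\cong\R^d$). For each such fixed $u$, the fiber $E_F\cap(\{u\}\times\C)$ is a proper algebraic subset of the line $\{u\}\times\C$ — here one uses that $E_F$ is an algebraic curve in $\C^2$ and that $\{u\}\times\C\not\subset E_F$ for all but finitely many $u$ (otherwise $E_F$ would contain a vertical line, forcing infinitely many vertical lines in $E_F$ since $E_F$ has finitely many components — contradiction, so at most finitely many $u$ are bad, and these contribute $O(R^d)$ points anyway by a separate trivial count). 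Thus for a generic fixed $u$ the set $E_F\cap(\{u\}\times\C)$ is finite, of size bounded by a constant $B$ independent of $u$ (the degree of the projection $E_F\to\C$ onto the first coordinate). Consequently the points $(u,v)\in F(\O_\k^2)$ with this fixed $u$ all lie within distance $1$ of one of $\le B$ points $v_1,\dots,v_B\in\C$; since they also lie in $\O_\k$, which is a lattice, each disk of radius $1$ contains $O(1)$ elements of $\O_\k$. Hence there are $O(1)$ points of $F(\O_\k^2)$ over each fixed $u$, and summing over the $O(R^d)$ admissible values of $u$ gives $N_{F(\O_\k^2)}(R)\le cR^d$.

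The main obstacle — and the point that needs the most care — is controlling the behaviour of $E_F$ over the exceptional values of $u$, i.e. ruling out (or absorbing) the finitely many $u$ for which $\{u\}\times\C\subseteq E_F$, and making the bound $B$ on $\#(E_F\cap(\{u\}\times\C))$ genuinely uniform in $u$. Both are handled by elementary algebraic geometry: write $E_F$ as the zero set of a polynomial $g(x,y)\in\C[x,y]$, factor out the (finitely many) factors of the form $(x-u_0)$ so that the remaining part has no vertical components, and note that its $x$-degree is finite, which bounds the number of $y$-roots uniformly away from those finitely many bad $x$-values; the bad vertical lines contain at most $O(R^d)$ integral points of bounded height each, and there are finitely many of them, so they contribute $O(R^d)$, consistent with (4). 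One should also double-check that the existence of $F^{-1}(L)$ with only finitely many integral points, or the non-invertibility hypothesis, is not actually needed in the estimate itself — it is needed only to guarantee that $F$ is a genuine counterexample so that (3) (from Proposition 1 of \cite{Chau2006}) applies; the counting argument above then runs unconditionally given (3).
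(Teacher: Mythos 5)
Your argument is correct and follows essentially the same route as the paper: invoke the containment (3), count the $O(R^d)$ admissible values of $u\in\O_\k$, and for each such $u$ bound the fiber by $\deg E_F$ times the number of elements of $\O_\k$ in a disk of radius $1$ (finite precisely because $\k=\Q$ or imaginary quadratic). You are in fact slightly more careful than the paper's one-line estimate, which does not explicitly address the finitely many $u$ for which $\{u\}\times\C$ is a component of $E_F$ — there (3) gives no information and one must fall back on the trivial $O(R^d)$ count per bad line, exactly as you do.
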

 
\begin{proof}  
Let $M(R)$ denote the set of points $(u,v)\in F(\O_\k^2)$ such that $u$ is of height $\leq R$.
By definitions  and (3) we can see
$$N_{F(\O_\k^2)}(R)\leq \#M(R)\leq n_\k\deg E_F N_{\O_\k}(R)$$, where   $\deg E_F$ is the degree of the exceptional value curve $E_F$ of $F$ and $n_\k$ is the maximum  of the numbers of integers of $\k$ on disks of radium $1$.
This together with (1) implies the  asymptotic estimate (4).
\end{proof}
Note that the constant $n_\k$ in the above proof is finite only for $\k=\Q$ or an imaginary quadratic field. 
It would be interesting to know if the uniformly boundedness of the numbers of integral points on the fibers of $P$, shown in Remark 3, is true for any number field $\k$ and for any possible counterexample $F$ over $\O_\k$ to the Jacobian conjecture. If this is the case we would have the asymptotic estimate (4) for any number field.

The Uniform Bound Conjecture, which is a consequence of the famous Bombieri-Lang conjecture in Arithmetic Geometry (see \cite{Caporaso} for example), says  that the numbers of rational points on algebraic curves over $\k$ of genus $g>1$    are uniformly upper bounded by a constant depended only on $g$ and $\k$.

\begin{proposition} Let $\k$ be a number field. 
 Assuming the Uniform Bound Conjecture, for every polynomial map $F=(P,Q)\in \O_\k[x,y]^2$ with $\det DF\equiv 1$ and $g_P>1$, if  exist,
 $$N_{F(\O_\k^2)}(R)\leq C R^d \text{ as } R\rightarrow +\infty$$
 with a number $C>0$.
 \end{proposition}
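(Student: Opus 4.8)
The plan is to follow the same strategy as in the proof of Proposition~6, replacing the containment~(3) — which was available only for $\k=\Q$ or imaginary quadratic fields — by a fibrewise counting argument that is legitimate for every number field once we invoke the Uniform Bound Conjecture. First I would observe that, since $F$ is a counterexample with $\det DF\equiv 1$, the geometric degree $\deg_{\mathrm{geo.}}F$ is finite, so each fiber $F^{-1}(u,v)$ consists of at most $\deg_{\mathrm{geo.}}F$ points; consequently every point $(u,v)\in F(\O_\k^2)$ is the image of at least one integral point whose first coordinate $x$ satisfies $P(x,y)=u$. The key point is therefore to control, for a fixed value $k\in\O_\k$, the number of integral points on the fiber $P=k$, and then to sum over the admissible values $k=u$ of bounded height.

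Next I would set up the fibrewise estimate. For a value $k\notin E_P$ the fiber $\{P=k\}$ is, by Siegel's theorem together with Theorem~4 (exactly as in the proof of Theorem~2), a rational curve with $n_P\in\{1,2\}$ places at infinity; but $n_P=1$ would force $F$ invertible and $n_P=2$ is excluded by Druzkowski's theorem (Theorem~4), so in fact a non-invertible $F$ has \emph{no} value $k\notin E_P$ whose fiber carries infinitely many integral points. The genuinely unbounded contributions can only come from the finitely many exceptional values $k\in E_P\cap\O_\k$. For the generic values the hypothesis $g_P>1$ means the smooth projective model of $\{P=k\}$ has genus $g_P>1$, so the Uniform Bound Conjecture gives a bound $B=B(g_P,\k)$ on the number of $\k$-rational points, hence a fortiori on the number of integral points, uniformly in $k$. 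For each of the finitely many exceptional $k\in E_P$ the fiber $\{P=k\}$ still has only finitely many integral points (again because $g_P>1$ forces each component to have genus $>1$ unless it is a line, and a non-invertible $F$ cannot have a fiber that is a line meeting $\O_\k^2$ infinitely often — this needs a short argument, or one simply absorbs these finitely many fibers into the constant $C$). Thus there is a single constant $B_0=B_0(F,\k)$ bounding the number of integral points on every fiber $P=k$, $k\in\O_\k$.

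With the uniform fiber bound $B_0$ in hand, the counting is immediate: if $(u,v)\in F(\O_\k^2)$ has height $\le R$, then $u\in\O_\k$ has height $\le R$, and for each such $u$ there are at most $B_0$ integral points on $\{P=u\}$, hence at most $B_0$ points of $F(\O_\k^2)$ with first coordinate $u$ (since $F$ is injective on each fiber of $P$ as a map to $\k^2$... more precisely each such point is an image of one of those $\le B_0$ integral points). Therefore
\[
N_{F(\O_\k^2)}(R)\;\le\; B_0\cdot N_{\O_\k}(R),
\]
and Schanuel's theorem~(1) gives $N_{\O_\k}(R)\sim cR^{d}$, whence $N_{F(\O_\k^2)}(R)\le CR^{d}$ for a suitable $C>0$ and all large $R$. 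The main obstacle — and the only place the Uniform Bound Conjecture is actually used — is establishing the \emph{uniform} fiber bound $B_0$ independent of $k$; without uniformity one would get a bound growing with the number of relevant values of $k$, destroying the $R^d$ estimate. A secondary technical point is the treatment of the finitely many exceptional fibers $k\in E_P$, but since each of them has only finitely many integral points (the non-invertibility of $F$ rules out a line fiber with infinitely many integral points, by Theorem~2), their total contribution is a constant that is harmlessly absorbed into $C$.
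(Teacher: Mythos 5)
Your overall strategy is the same as the paper's: slice $F(\O_\k^2)$ by the first coordinate $u$, use the Uniform Bound Conjecture to bound the number of integral (indeed rational) points on each generic fiber $P=u$ by a constant $C(\k,g_P)$, and sum over the at most $N_{\O_\k}(R)\sim cR^d$ admissible values of $u$ via Schanuel. That part is fine.

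There is, however, a genuine gap in your treatment of the exceptional values $\lambda\in E_P\cap\O_\k$. You assert that each such fiber $\{P=\lambda\}$ still has only finitely many integral points, arguing that "$g_P>1$ forces each component to have genus $>1$ unless it is a line" and that Theorem~2 rules out the bad case. Neither step works: $g_P$ is the genus of the \emph{generic} fiber, and fibers over $E_P$ can degenerate to rational components with few places at infinity, which Siegel's theorem permits to carry infinitely many integral points; and Theorem~2 explicitly assumes $k\in\O_\k\setminus E_P$, so it says nothing about these fibers. Indeed, the paper itself points out (just before the statement of the Uniform Bound Conjecture) that uniform boundedness of integral points on \emph{all} fibers of $P$ is an open question for general number fields — so your "short argument" would settle an open problem. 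Consequently your fallback of "absorbing these finitely many fibers into the constant" also fails, since an exceptional fiber could a priori contribute infinitely many image points. The paper's fix is simpler and avoids the issue entirely: since $F\in\O_\k[x,y]^2$, one has $F(\O_\k^2)\subset\O_\k^2$, so the slice $L_\lambda=\{(u,v)\in F(\O_\k^2):u=\lambda\}$ is contained in $\{\lambda\}\times\O_\k$ and hence satisfies $N_{L_\lambda}(R)\leq N_{\O_\k}(R)=O(R^d)$ with no finiteness claim about the fiber; summing over the finitely many $\lambda\in E_P\cap\O_\k$ keeps the total at $O(R^d)$. Replace your exceptional-fiber paragraph with this observation and your proof is complete.
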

 \begin{proof} Let $M:=\{(u,v)\in F(\O_\k^2): u\in \O_\k\setminus E_P\}$ and 
$L_\lambda :=\{(u,v)\in F(\O_\k^2): u=\lambda\}$, $\lambda\in \O_\k$. We can represent
$$F(\O_\k^2)= M\cup (\bigcup_{\lambda\in \O_\k\cap E_P} L_\lambda). $$
Observe that $N_{L_\lambda}(R)\leq N_{\O_k}(R)$ for all $\lambda\in \O_\k$, while by the Uniform Bound Conjecture, there is a constant $C(\k,g_P)$ such that  
$\# L_\lambda\leq C(\k,g_P)$ for all $\lambda\in \O_\k\setminus E_P$. Then, we can see  
$$\begin{aligned} N_{F(\O_\k^2)}(R)&\leq \sum_{\lambda\in E_P\cap O_\k}N_{L_\lambda}(R)+N_M(R)\\
&\leq  \#(E_P\cap O_\k)N_{\O_k}(R)+C(\k,g_P)N_{\O_k}(R)\end{aligned}$$
Hence, by (1)  we get
$$  N_{F(\O_\k^2)}(R)\leq C R^d \text{ as } R \rightarrow +\infty$$
, where $C:=c( \# E_P+C(\k,g_P))<+\infty $ as $\# E_p<+\infty$.

 \end{proof}



\bigskip
\noindent{\it Acknowledgments}: The author wishes to express his thank to the professors Arno  van den Essen and  Ha Huy Vui   for valuable discussions. The author would like to thank  the Vietnam Institute for Advanced Study in Mathematics for their helps.
\bibliographystyle{amsplain}

\begin{thebibliography}{10}

\bibitem{Abhyankar} S. S.  Abhyankar and T. T.   Moh, {\it  Embeddings of the line in the plane},     J.   Reine Angew.
Math.   {\bf 276} (1975), 148--166.

\bibitem{Bass} H.   Bass, E.   Connell and D. Wright, {\it  The Jacobian conjecture: reduction
of degree and formal expansion of the inverse}, Bull. Amer.   Math. Soc. (N.S.) {\bf 7} (1982),     287--330.

\bibitem{Bodin} A. Bodin, {\it Integral points on generic fibers}, J. London Math. Soc., {\bf 81} (2010), 563--572.

\bibitem{Caporaso} L. Caporaso, J.  Harris and M. Barry, {\it Uniformity of rational points}, J. Amer. Math. Soc. {\bf 10}  (1997), no. 1, 1--35.

\bibitem{Cohen} S.D. Cohen, {\it The distribution of Galois groups and Hilbert's irreducibility theorem}, Proc. London Math. Soc. {\bf 43} (1981), 227--250.

\bibitem{EssenBook} A. Van den Essen,  { Polynomial automorphisms and the Jacobian conjecture}, Progress in Mathematics {\bf 190},  Birkhauser,  Basel, 2000.

\bibitem{Druz} L. M. Druzkowski, A geometric approach to the Jacobian Conjecture in $\C^2$, 
 Ann. Pol. Math. {\bf 55} (1991), 95--101.  

\bibitem{Keller} O.  Keller, {\it  Ganze Cremona-Transformatione}, Monatsh. Math. Phys. {\bf 47} (1939),
299--306.

\bibitem{McKenna} K. McKenna and L. van den Dries, {\it Surjective polynomial maps, and a remark on the Jacobian problem}, Manuscripta Mathematica {\bf 67} (1) (1990), 1--15.


\bibitem{Chau2006} Nguyen Van Chau, {\it  Integer points on a curve and the plane Jacobian problem}, Ann. Polon. Math. {\bf 88 } (2006), no. 1, 53–-58.

\bibitem{Scha} S. H. Schanuel, {\it Heights in number fields}, Bull. Soc. Math. France {\bf 107} (1979), 433--449.

\bibitem{Serre}  J-P. Serre, Lectures on the Mordell-Weil theorem, 3rd edition, Springer Fachmedien Wiesbaden, 1997.

\bibitem{Siegel} C. L. Siegel, {\it Uber einige Anwendungen Diophantischer Approximationen}, Abh. Preuss Akad. Wiss. Phys.-Math. Kl., 1929, Nr. 1; Ges. Abh., Band {\bf 1}, 209--266.

\end{thebibliography}

\end{document}